\newcommand{\rrvert}{\vert}
\newcommand{\llvert}{\vert}
\newtheorem{thm}{Theorem}
\newtheorem{prop}{Proposition}
\newtheorem{lem}{Lemma}
\newtheorem{cor}{Corollary}
\theoremstyle{definition}
\newtheorem{rem}{Remark}
\newcommand{\si}{\sigma}
\newcommand{\R}{\mathbb{R}}
\newcommand{\Ph}{\varPhi}
\newcommand{\de}{\delta}
\newcommand{\al}{\alpha}
\newcommand{\om}{\omega}
\begin{document}
\begin{frontmatter}

\title{The rate of convergence to the normal law in terms of~pseudomoments}

\author[a]{\inits{Yu.}\fnm{\xch{Yuliya}{Yuliia}}\snm{Mishura}}\email{myus@univ.kiev.ua}
\author[a]{\inits{Ye.}\fnm{\xch{Yevheniya}{Yevheniia}}\snm{Munchak}\corref{cor1}}\email{yevheniamunchak@gmail.com}
\cortext[cor1]{Corresponding author.}
\address[a]{Taras Shevchenko National University of Kyiv, Volodymyrska
str. 64, 01601, Kyiv, Ukraine}
\author[b]{\inits{P.}\fnm{Petro}\snm{Slyusarchuk}}\email{petro\_slyusarchuk@ukr.net}
\address[b]{Uzhhorod National University, Pidhirna str. 46, 88000,
Uzhhorod, Ukraine}

\markboth{Yu. Mishura et al.}{The rate of convergence to the normal law in
terms of pseudomoments}

\begin{abstract}
We establish the rate of convergence of distributions of sums of
independent identically distributed random variables to the Gaussian
distribution in terms of truncated pseudomoments by implementing the
idea of Yu. Studnyev for getting estimates of the rate of convergence
of the order higher than $n^{-1/2}$.
\end{abstract}

\begin{keyword}
Rate of convergence\sep
truncated pseudomoments\sep
Gaussian distribution

\MSC[2010] 
60E05 \sep 60F05
\end{keyword}

\received{21 February 2015}
%
\revised{4 April 2015}
\accepted{10 April 2015}
\publishedonline{21 April 2015}
\end{frontmatter}

\section{Introduction}
Applications of the central limit theorem and other weak limit theorems
are closely connected to the rate of convergence to the limit
distribution. The rate of convergence was studied by many authors; see
\cite{petrov} and the references therein. The simplest result in this
direction is the Berry--Esseen inequality. Let $ \{\xi_n, n\ge
1 \}$ be a sequence of independent identically distributed random
variables (iidrvs) with distribution function $F (x)$, $E\xi_i=0$, and
$D\xi_i=\si^2<\infty$. Let~$\beta_3=\int_{\R}{\llvert x\rrvert ^{3}}dF(x)$
be the absolute 3rd moment, $\Ph_n(x) = P \{ \frac
{\xi
_1+\xi_2+\ldots+\xi_n}{\si\sqrt{n}}<x \} $, and let $\Ph
(x)$, $x\in
\mathbb{R,}$ be the standard normal distribution function. Then the
Berry--Esseen inequality states that
\[
\sup_{x\in\mathbb{R}} \bigl\llvert \Ph_n(x) - \Ph(x)\bigr\rrvert
\le \frac
{C\beta_3}{\si^3\sqrt{n}}.
\]
This estimate gives the rate of convergence $O(n^{-1/2})$ and the
asymptotic expansions of the distribution function of the sum of iidrvs
in terms of semiinvariants, presented in the book \cite{petrov}. The
same rate of convergence was obtained by Paulauskas \cite{Paulauskas}
in terms of pseudomoments. Let $\si=1$. Then the ``pseudomoment''
function is defined as $H(x)=F(x)-\Ph(x)$, the (absolute) third
pseudomoment is defined as
$\nu=\int_{\R}\llvert x\rrvert ^3\llvert dH(x)\rrvert $, and
we have
\[
\sup_{x\in\mathbb{R}} \bigl\llvert \Ph_n(x) - \Ph(x)\bigr\rrvert
\le C \max \bigl(\nu,\nu^{\frac{1}{4}} \bigr)n^{-\frac{1}{2}}.
\]
However, this rate of convergence is slow, for instance, from the point
of view of financial applications. The conditions that allow one to
improve the rate of convergence were formulated by several authors.
After the introduction of pseudomoments in \cite{Zolotarev1}, they are
widely used in limit theorems. Zolotarev~\cite{Zolotarev2} obtained
very general estimates in the central limit theorem using a different
type of pseudomoments. Studnyev \cite{Studnyev} obtained the following
estimate of the rate of convergence in terms of pseudomoments. Let
$ \{\xi_n, n\ge1 \}$ be centered iidrvs with unit variance and
characteristic function $f(t)$, $\mu_k=\int_{R} x^k dH(x) $ be the
$k$th-order pseudomoment, and $V(x)=V_{-\infty}^{x}H(z)$ be the
variation of the~function $H$.

\begin{prop}[\cite{Studnyev}]
Let $F(x)$ have finite moments up to the $q$th order for some $q\ge3$
and satisfy the Cramer condition
$\overline{\lim}_{\llvert t\rrvert \to\infty}\llvert f(t)\rrvert <1$. Then
\begin{align*}
\sup_{x\in\mathbb{R}} \bigl|\Ph_n(x) - \Ph(x)\bigr|
= O \Biggl(\sum_{k=3}^{q}
\frac{\llvert \mu_k \rrvert }{n^\frac{k-2}{2}}+ \frac{1}{n^\frac{q-2}{2}}\frac{1}{\sqrt{n}}
\int_{0}^{\sqrt{n}}dx\int_{\llvert z\rrvert >x}
\llvert z\rrvert ^q dV(z) \Biggr).
\end{align*}
\end{prop}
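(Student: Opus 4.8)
The plan is to run the classical characteristic-function argument for Berry--Esseen-type bounds, organised so that the Taylor coefficients that appear are the pseudomoments $\mu_k$ and the Taylor remainder is governed by the truncated pseudomoment in the statement. Write $h(t)=f(t)-e^{-t^2/2}=\int_{\R}e^{itx}\,dH(x)$; since $F$ and $\Ph$ share the moments of orders $0,1,2$, one has $\mu_0=\mu_1=\mu_2=0$. Esseen's smoothing inequality gives, for every $T=T_n>0$,
\[
\sup_{x}\bigl|\Ph_n(x)-\Ph(x)\bigr|\le\frac1\pi\int_{-T}^{T}\frac{|f^n(t/\sqrt n)-e^{-t^2/2}|}{|t|}\,dt+\frac{C}{T},
\]
and taking $T_n$ equal to a sufficiently large power of $n$ (say $T_n=n^{q}$) makes $C/T_n$ of smaller order than the right-hand side of the Proposition, which, whenever $F\ne\Ph$, is bounded below by a fixed negative power of $n$; the case $F=\Ph$ is trivial. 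Fix a small $\de>0$, depending only on $\beta_3=E|\xi_1|^3<\infty$, and split the integral at $|t|=\de\sqrt n$.

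For the tail $\de\sqrt n<|t|\le T_n$, the Cramér condition forces $F$ to be non-lattice, so $|f(s)|<1$ for every $s\ne0$; together with continuity of $f$ and $\overline{\lim}_{|s|\to\infty}|f(s)|<1$ this yields $\theta:=\sup_{|s|\ge\de}|f(s)|<1$. Hence on this range $|f^n(t/\sqrt n)|\le\theta^{\,n}$ and $e^{-t^2/2}\le e^{-\de^2 n/2}$, so the corresponding part of the integral is $O(\theta^{\,n}\log T_n)$, which is exponentially small and negligible.

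For the central part $|t|\le\de\sqrt n$, choose $\de$ so small that $|f(s)|\le e^{-s^2/4}$ for $|s|\le\de$ (possible since $f(s)=1-\frac{s^2}{2}+R(s)$ with $|R(s)|\le|s|^3\beta_3/6$); also $e^{-s^2/2}\le e^{-s^2/4}$. Applying $|a^n-b^n|\le n\max(|a|,|b|)^{n-1}|a-b|$ with $a=f(t/\sqrt n)$, $b=e^{-t^2/(2n)}$ gives, for $n\ge2$,
\[
\bigl|f^n(t/\sqrt n)-e^{-t^2/2}\bigr|\le n\,e^{-t^2/8}\,\bigl|h(t/\sqrt n)\bigr|.
\]
Expanding $e^{iu}$ to order $q$ and integrating against $dH$, using $\mu_0=\mu_1=\mu_2=0$ and $|e^{iu}-\sum_{k=0}^{q}(iu)^k/k!|\le\min(|u|^{q+1}/(q+1)!,\,2|u|^q/q!)$ (the $\min$ must be kept, as only the $q$-th pseudomoment is finite), one obtains
\[
h(s)=\sum_{k=3}^{q}\frac{(is)^k}{k!}\,\mu_k+r(s),\qquad |r(s)|\le C\,|s|^q\int_{\R}|x|^q\min(|s|\,|x|,1)\,dV(x).
\]
Substituting $s=t/\sqrt n$ and integrating $n\,|t|^{-1}e^{-t^2/8}(\,\cdot\,)$ over $\R$: the $k$-th polynomial term contributes $\bigl(\int_{\R}|t|^{k-1}e^{-t^2/8}\,dt\bigr)|\mu_k|\,n^{-(k-2)/2}=O(|\mu_k|\,n^{-(k-2)/2})$, while for the remainder, Fubini together with $\min(|t|\,|x|/\sqrt n,1)\le\max(|t|,1)\min(|x|/\sqrt n,1)$ makes the inner $t$-integral a constant multiple of $\min(|x|/\sqrt n,1)$, so this term is
\[
O\!\left(\frac1{n^{(q-2)/2}}\int_{\R}|x|^q\min\Bigl(\frac{|x|}{\sqrt n},1\Bigr)dV(x)\right)=O\!\left(\frac1{n^{(q-2)/2}}\cdot\frac1{\sqrt n}\int_0^{\sqrt n}dx\int_{|z|>x}|z|^q\,dV(z)\right),
\]
the last equality being a Fubini computation, since $\min(|x|/\sqrt n,1)=\frac1{\sqrt n}\int_0^{\sqrt n}\mathbf 1_{\{x'<|x|\}}\,dx'$. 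Adding the three contributions proves the bound.

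The routine parts are the smoothing step and the Cramér tail; the real work is the central estimate, and the two points that require care are these. First, one must not carry the Taylor expansion of $e^{iu}$ one term further, because $F$ is only assumed to have $q$ finite moments, so retaining the $\min$ in the Taylor remainder is exactly what produces the truncated pseudomoment appearing in the statement. Second, the Gaussian weight $e^{-t^2/8}$ — which arises from the factor $\max(|a|,|b|)^{n-1}$ — is what confines $t$ to an $O(1)$ window and converts the apparent loss of a factor $n$ into the correct decay $n^{-(k-2)/2}$.
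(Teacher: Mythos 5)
The paper states this Proposition as a quoted result of Studnyev and gives no proof of it, so there is no internal argument to compare against; your proposal therefore has to stand on its own, and it does. The route you take is the classical one and is essentially the same machinery the authors themselves deploy later for Theorem~\ref{t1}: the Lo\`eve/Esseen smoothing inequality, the telescoping bound for $|u^n-v^n|$ with a Gaussian majorant of the characteristic function near the origin, and the Zolotarev-type Taylor remainder $\min\bigl(|u|^{q+1}/(q+1)!,\,2|u|^q/q!\bigr)$ --- keeping the minimum is indeed the step that manufactures the truncated pseudomoment, exactly as the authors keep both the $\delta=0$ and $\delta=1$ cases in their Lemma~\ref{l1}. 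The one place where you differ in substance from the paper's framework is the treatment of the range $|t|>\delta\sqrt n$: you use the Cram\'er condition (correctly noting that it rules out $|f(s)|=1$ for $s\ne0$ and hence gives $\sup_{|s|\ge\delta}|f(s)|<1$), whereas the paper's own theorems replace Cram\'er by integrability of $f$; this is precisely the trade-off the introduction advertises. Your identification of $\int_{\R}|x|^q\min(|x|/\sqrt n,1)\,dV(x)$ with $\tfrac1{\sqrt n}\int_0^{\sqrt n}dx\int_{|z|>x}|z|^q\,dV(z)$ via Fubini is correct, as is the elementary bound $\min(|t||x|/\sqrt n,1)\le\max(|t|,1)\min(|x|/\sqrt n,1)$ that decouples the double integral. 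The only point I would ask you to write out more carefully is the claim that the smoothing term $C/T_n$ and the exponentially small tail are absorbed into the right-hand side: this needs the observation that for $F\ne\Ph$ the variation measure $dV$ cannot be concentrated at the origin, so $\int_0^1\int_{|z|>x}|z|^q\,dV(z)\,dx>0$ and the right-hand side is bounded below by a positive (distribution-dependent) multiple of $n^{-(q-1)/2}$. With that spelled out, the argument is complete.
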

We can see that the condition $\mu_k=0$, $3\leq k\leq r $ supplies the
rate of convergence $O(n^{\frac{-r+1}{2}})$.
The rate of convergence was also studied in \cite{Bojaryshcheva,Honak,Sl-Pol}. In our work, we use a different type of
pseudomoments and
get the same rate of convergence avoiding the Cramer condition.
Instead, we impose the boundedness of the truncated pseudomoments and
integrability of the characteristic function.

\section{Generalization of Studnyev's estimate. Main results}

Let, as before, $ \{\xi_n, n\ge1 \}$ be a sequence of iidrvs
with $E\xi_i=0$, $D\xi_i=\si^2\in(0, \infty)$, distribution
function $F
(x)$, and characteristic function $f (t)$, and let $\Ph_n(x)$, $x\in
\mathbb{R}$, be the distribution function of the random variable
\[
S_n= (\si\sqrt{n} )^{-1} (\xi_1+
\xi_2+\cdots+\xi _n ).
\]

We assume that, for some $m\ge3$, there exist the pseudomoments
\[
\mu_k=\int_{\R}{x}^{k} {d} {H(x)},
\quad k=3,\ldots,m\in\mathbb{N},
\]
where $H(x)=F(x\si)-\Ph(x)$.
The truncated pseudomoments are defined as

\[
\nu_{n}^{(1)}(m)=\int_{|x|\le\si\sqrt{n}}\llvert x
\rrvert ^{m+1}\bigl\llvert dH(x)\bigr\rrvert
\]
(``truncation from above'')
and
\[
\nu_{n}^{(2)}(m)=\int_{|x|>\si\sqrt{n}}\llvert x
\rrvert ^{m}\bigl\llvert dH(x)\bigr\rrvert
\]
(``truncation from below'').

\begin{thm}
\label{t1}
Let the following conditions hold\emph{:}
\begin{itemize}
\item[\rm(i)] The characteristic function is integrable\emph{:} $A=\int_{\mathbb{R}}|f(t)|dt<\infty$\emph{;}

\item[\rm(ii)] The pseudomoments up to order $m$ equal zero, and the
truncated pseudomoments are bounded\emph{:}
\begin{align*}
&\mu_k=0,\quad  k=3,\ldots,m, \ \text{for some} \ m\ge3,\quad  \text {and}\\
&\nu_{n}(m)=\max \bigl\{\nu_{n}^{(1)}(m),
\nu_{n}^{(2)}(m) \bigr\} <\frac{1}2e^{-\frac{3}2}.
\end{align*}
\end{itemize}
Then, for all $n\ge2$,
\begin{align*}
\sup_{x\in\mathbb{R}} \bigl|\Ph_n(x) \,{-}\, \Ph(x)\bigr|
\le2C^{(1)}_m\frac{\nu_{n}^{(1)}(m)}{n^{\frac{m-1}{2}}}\,{+}\,
2C^{(2)}_m\frac{\nu_{n}^{(2)}(m)}{n^{\frac{m-2}{2}}} \,{+}\,
\frac{\si A}{\pi}b^{n-1}\,{+}\,
\nu_{n}(m)\frac{4e^{\frac{3}{2}}}{\pi}\frac{e^{-\frac{n}{2}}}{n},
\end{align*}
where
\begin{align*}
&C^{(1)}_m=\frac{12^{\frac{m+1}{2}}\varGamma(\frac{m+1}{2})}{\pi(m+1)!},\qquad
C^{(2)}_m=2C^{(1)}_{m-1},\\
&b=\exp \biggl\{-\frac{\pi^2}{24A^{2}\si^{2} (2+\pi)^{2}} \biggr\}<1.
\end{align*}
\end{thm}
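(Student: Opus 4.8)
The plan is to pass, via the Fourier inversion (Esseen) inequality, from the uniform distance to an integral of the difference of characteristic functions, to strip off the Gaussian factor by a telescoping identity, and to exploit the vanishing pseudomoments inside a Taylor expansion; the large-frequency part is then disposed of by the integrability of $f$. Write $g(t)=f\big(t/(\si\sqrt n)\big)$, so the characteristic function of $S_n$ is $g^n(t)$ and $\int_{\R}e^{itx/\sqrt n}\,dH(x)=g(t)-e^{-t^2/(2n)}$. Note first that $\mu_0=\int_{\R}dH=0$ (both $F(\si\,\cdot)$ and $\Ph$ are distribution functions), $\mu_1=0$ (because $E\xi_i=0$) and $\mu_2=0$ (because $D\xi_i=\si^2$); with (ii) this gives $\mu_k=0$ for $k=0,1,\dots,m$, so the degree-$m$ Taylor polynomial of $e^{itx/\sqrt n}$ integrates to $0$ against $dH$ and
\[
g(t)-e^{-t^2/(2n)}=\int_{\R}\Bigg(e^{itx/\sqrt n}-\sum_{k=0}^{m}\frac{(itx/\sqrt n)^{k}}{k!}\Bigg)\,dH(x).
\]
Bounding the integrand by $\min\big(|tx/\sqrt n|^{m+1}/(m+1)!,\ 2|tx/\sqrt n|^{m}/m!\big)$ and breaking the integral at $|x|=\si\sqrt n$ (the first bound on $\{|x|\le\si\sqrt n\}$, the second on $\{|x|>\si\sqrt n\}$) yields the key block estimate
\[
\big|g(t)-e^{-t^2/(2n)}\big|\ \le\ \frac{|t|^{m+1}\,\nu_{n}^{(1)}(m)}{n^{(m+1)/2}(m+1)!}\ +\ \frac{2|t|^{m}\,\nu_{n}^{(2)}(m)}{n^{m/2}\,m!};
\]
in particular, on $|t|\le\sqrt n$ the smallness $\nu_n(m)<\frac12 e^{-3/2}$ forces $|g(t)|\le e^{-t^2/(6n)}$.

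By the inversion formula — legitimate since $g^n(t)-e^{-t^2/2}=O(|t|^{m})$ near the origin and $\int_{\R}|g^{n}(t)|\,|t|^{-1}\,dt<\infty$ by (i) — we have $\sup_{x}|\Ph_n(x)-\Ph(x)|\le\frac1\pi\int_0^\infty |g^n(t)-e^{-t^2/2}|\,t^{-1}\,dt$. Apply the telescoping identity
\[
g^n(t)-e^{-t^2/2}=\big(g(t)-e^{-t^2/(2n)}\big)\sum_{j=0}^{n-1}g(t)^{j}\,e^{-(n-1-j)t^2/(2n)}
\]
and split the integral at $\sqrt n$. On $[0,\sqrt n]$ insert the block estimate and $|g(t)|\le e^{-t^2/(6n)}$: each summand is then $\le e^{-(n-1)t^2/(6n)}\le e^{-t^2/12}$ for $n\ge2$, so the telescoping sum is $\le n\,e^{-t^2/12}$; after extending the integral to $[0,\infty)$ one is left with $\int_0^\infty t^{m}e^{-t^2/12}\,dt=\frac12\,12^{(m+1)/2}\varGamma\big(\frac{m+1}{2}\big)$ and the same integral with $t^{m-1}$, which produce the two main terms carrying $C^{(1)}_m$, $C^{(2)}_m$ and the powers $n^{-(m-1)/2}$, $n^{-(m-2)/2}$. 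On $[\sqrt n,\infty)$ write $|g^n(t)-e^{-t^2/2}|\le|g^n(t)|+e^{-t^2/2}$: the substitution $u=t/(\si\sqrt n)$ turns $\frac1\pi\int_{\sqrt n}^\infty|g^n(t)|\,t^{-1}\,dt$ into $\frac1\pi\int_{1/\si}^\infty|f(u)|^{n}\,u^{-1}\,du\le\frac{\si}{\pi}\,b^{\,n-1}\int_{\R}|f(u)|\,du=\frac{\si A}{\pi}\,b^{\,n-1}$ with $b=\sup_{|u|\ge1/\si}|f(u)|$, while $\frac1\pi\int_{\sqrt n}^\infty e^{-t^2/2}\,t^{-1}\,dt\le\frac{e^{-n/2}}{\pi n}$, which (together with the bound on $\nu_n(m)$) accounts for the last, exponentially small term.

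Two points need care, the first being the main obstacle: proving $b<1$ with the \emph{explicit} value $\exp\{-\pi^2/(24A^2\si^2(2+\pi)^2)\}$, from $A=\int_{\R}|f|<\infty$ and $D\xi_i=\si^2$ alone and without a Cramér-type hypothesis. I would use that $f\in L^1$ makes $F$ have a bounded density $p$, $\|p\|_\infty\le A/(2\pi)$, so the symmetrization $\xi_i-\xi_i'$ has density $q=p*\tilde p$ with $\|q\|_\infty\le A/(2\pi)$, $\int q=1$, $\int z^2q(z)\,dz=2\si^2$; then for $|u|\ge1/\si$,
\[
1-|f(u)|^2=\int_{\R}\big(1-\cos uz\big)q(z)\,dz\ \ge\ \varepsilon\Big(1-\int_{\{\,1-\cos uz<\varepsilon\,\}}q(z)\,dz\Big),
\]
and, since $\{1-\cos uz<\varepsilon\}$ is a union of short intervals of density $\sqrt{2\varepsilon}/\pi$ while $q$ is bounded, and $\int_{|z|>R}q\le2\si^2/R^2$ by Chebyshev, one optimizes over $\varepsilon$ and $R$ to bound the last integral away from $1$ for a suitable $\varepsilon$ of order $\pi^2/(A^2\si^2)$; combined with $|f(u)|\le e^{-(1-|f(u)|^2)/2}$ this yields the stated $b$, the constants $24$ and $2+\pi$ coming out of the optimization. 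The second delicate point is the telescoping step: because $|g(t)|$ \emph{exceeds} $e^{-t^2/(2n)}$, the sum carries a factor $(1+\theta(t))^n$ with $\theta(t)\le c\,t^2\nu_n(m)/n$, and this has to be absorbed into the decaying weights $e^{-(n-1-j)t^2/(2n)}$ uniformly on $|t|\le\sqrt n$; it is precisely this absorption (valid for all $n\ge2$) that consumes the hypothesis $\nu_n(m)<\frac12 e^{-3/2}$.
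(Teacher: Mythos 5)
Your skeleton matches the paper's: an Esseen-type inversion, the telescoping identity for $g^n(t)-e^{-t^2/2}$, the Taylor bound with vanishing pseudomoments split at $|x|=\si\sqrt n$ (this is exactly the paper's Lemma~1), and a uniform bound $|f(u)|\le b<1$ away from the origin for the high-frequency block. Your use of the exact inversion formula with $T=\infty$ (legitimate here, since $f\in L^1$ and the first moments match) is a genuine simplification over the paper's use of Loeve's smoothing inequality with a finite $T$, and it even produces the main terms with constants $\frac12 C_m^{(1)},\ \frac12 C_m^{(2)}$ rather than $2C_m^{(1)},\ 2C_m^{(2)}$. Two steps, however, do not close. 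The first concerns the last term. Because you cut the frequency integral at $\sqrt n$, your Gaussian-tail contribution is $\frac1\pi\int_{\sqrt n}^\infty e^{-t^2/2}t^{-1}dt\le \frac{e^{-n/2}}{\pi n}$, with no factor of $\nu_n(m)$. The theorem's last term is $\nu_n(m)\frac{4e^{3/2}}{\pi}\frac{e^{-n/2}}{n}$, which vanishes as $\nu_n(m)\to0$; since condition (ii) bounds $\nu_n(m)$ only from \emph{above}, the phrase ``together with the bound on $\nu_n(m)$'' cannot convert your estimate into the stated one --- your term exceeds the claimed one by the factor $(4e^{3/2}\nu_n(m))^{-1}$, which is unbounded. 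The paper avoids this by cutting at $T_1(n,m)\sqrt n$ with $T_1(n,m)=\sqrt{-2\ln(2e\nu_n(m))}>1$, so that $e^{-nT_1^2/2}=(2e\nu_n(m))^n\le 2e\nu_n(m)e^{-(n-1)/2}$ and the tail inherits a factor $\nu_n(m)$. This in turn forces you to extend the bound $|g(t)|\le e^{-t^2/(6n)}$ from $|t|\le\sqrt n$ to the larger range $|t|\le T_1(n,m)\sqrt n$, which is the real content of the paper's Lemma~2 and uses the hypothesis $\nu_n(m)<\frac12 e^{-3/2}$ more sharply than your $|t|\le\sqrt n$ case does.

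The second gap is the explicit value of $b$. You rightly identify this as the crux, but your symmetrization/anti-concentration sketch is not carried out: the value $\exp\{-\pi^2/(24A^2\si^2(2+\pi)^2)\}$ is deferred to an ``optimization'' whose output is asserted rather than computed, so as written your argument proves the theorem only with some unspecified $b<1$ depending on $F$. The paper gets the constant with no work by citing Statulevi\v cius: a random variable with density bounded by $d$ and variance $\si^2$ satisfies $|f(t)|\le\exp\{-\frac{t^2}{96d^2(2\si|t|+\pi)^2}\}$; with $d=A/(2\pi)$ (from Fourier inversion of $f\in L^1$) one has $96d^2=24A^2/\pi^2$, and since $t\mapsto t^2/(2\si t+\pi)^2$ is increasing on $t>0$, evaluating at $|t|=1/\si$ gives exactly the stated $b$. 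If you want a self-contained proof you would need to actually run your $(\varepsilon,R)$ optimization and check that it reproduces (or improves) these constants; otherwise the clean route is to invoke the Statulevi\v cius inequality as the paper does.
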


\begin{cor}
\label{t1'}
Let $\xi_i$ be a random variable with bounded density $p(x)\le A_1$.
Suppose that condition \emph{(ii)} of Theorem \emph{\ref{t1}} holds.
Then, for all $n\ge3$,
\begin{align*}
\sup_{x\in\mathbb{R}} \bigl|\Ph_n(x) \,{-}\, \Ph(x)\bigr|
\,{\le}\,2C^{(1)}_m\frac{\nu_{n}^{(1)}(m)}{n^{\frac{m-1}{2}}}\,{+}\,
2C^{(2)}_m \frac{\nu_{n}^{(2)}(m)}{n^{\frac{m-2}{2}}} \,{+}\,
2\si A_1 b_1^{n-2}\,{+}\,\nu_{n}(m)\frac{4e^{\frac{3}{2}}}{\pi}
\frac{e^{-\frac{n}{2}}}{n},
\end{align*}
where $b_1=\exp \{-\frac{1}{96A_1^2\si^2 (2+\pi)^2}\}<1$.
\end{cor}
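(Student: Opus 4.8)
The plan is to follow the proof of Theorem~\ref{t1} essentially unchanged, modifying only the one step that uses the integrability hypothesis~(i), $A=\int_{\R}|f(t)|\,dt<\infty$. That hypothesis is needed there, first, to guarantee that the characteristic function $\phi_n(t):=f\bigl(t/(\si\sqrt n)\bigr)^n$ of $S_n$ is integrable, so that $S_n$ has a bounded continuous density and one has the exact inversion estimate $\sup_x|\Ph_n(x)-\Ph(x)|\le\frac1\pi\int_{\R}\bigl|\phi_n(t)-e^{-t^2/2}\bigr|\,|t|^{-1}\,dt$ (with no $O(1/T)$ remainder), and, second, to control the high-frequency part of this integral. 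In the setting of the corollary the density $p$ of $\xi_i$ satisfies $p\le A_1$, hence $p\in L^2(\R)$ with $\int_{\R}p(x)^2\,dx\le A_1\int_{\R}p(x)\,dx=A_1$, so by Parseval's identity $\int_{\R}|f(t)|^2\,dt=2\pi\int_{\R}p(x)^2\,dx\le2\pi A_1$. Since $|f|\le1$, for every $n\ge2$ we get $\int_{\R}|\phi_n(t)|\,dt=\si\sqrt n\int_{\R}|f(u)|^n\,du\le\si\sqrt n\int_{\R}|f(u)|^2\,du<\infty$, so the inversion estimate remains available.

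Next I would transcribe the low-frequency estimate of Theorem~\ref{t1} --- the bound for $\frac1\pi\int_{|u|\le1/\si}\bigl|f(u)^n-e^{-n\si^2u^2/2}\bigr|\,|u|^{-1}\,du$ --- together with the Gaussian-tail estimate, \emph{without change}: these depend only on the vanishing of $\mu_3,\dots,\mu_m$, on $\nu_n(m)<\frac12 e^{-3/2}$, and on $\si^2$, all of which are available, and they yield the first, second and fourth terms of the claimed inequality with the same constants $C_m^{(1)},C_m^{(2)}$. Only the high-frequency term $\frac1\pi\int_{|u|>1/\si}|f(u)|^n\,|u|^{-1}\,du$ must be re-estimated. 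Using $|u|^{-1}<\si$ on that region and, since $n\ge3$, the splitting $|f(u)|^n=|f(u)|^{n-2}|f(u)|^2$, I get
\[
\frac1\pi\int_{|u|>1/\si}\frac{|f(u)|^n}{|u|}\,du\le\frac{\si}{\pi}\Bigl(\sup_{|u|\ge1/\si}|f(u)|\Bigr)^{n-2}\int_{\R}|f(u)|^2\,du\le2\si A_1\Bigl(\sup_{|u|\ge1/\si}|f(u)|\Bigr)^{n-2}.
\]
It remains to note that the supremum of $|f|$ on $\{|u|\ge1/\si\}$ was bounded, inside the proof of Theorem~\ref{t1}, by $b=\exp\{-\pi^2/(24A^2\si^2(2+\pi)^2)\}$, and that this bound used only that the density of $\xi_i$ (equivalently, through $|f|^2$, of $\xi_1-\xi_2$) is bounded, with $A/(2\pi)$ as the operative bound; replacing $A/(2\pi)$ by $A_1$, i.e. $A$ by $2\pi A_1$, turns $b$ into $\exp\{-\pi^2/(24(2\pi A_1)^2\si^2(2+\pi)^2)\}=\exp\{-1/(96A_1^2\si^2(2+\pi)^2)\}=b_1<1$. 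The high-frequency term is therefore at most $2\si A_1 b_1^{n-2}$, and adding the four contributions gives the bound for $n\ge3$.

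The point needing most care is the claim that the low-frequency/Taylor-expansion part of the proof of Theorem~\ref{t1} does not secretly use hypothesis~(i) and may thus be quoted verbatim; this should be so, since that part only manipulates $f(u)^n$ for $|u|\le1/\si$ via the moments and pseudomoments of $F$. Apart from that, the sole substantive change is the replacement of $\int_{|u|>1/\si}|f(u)|^n\,du\le b^{n-1}\int_{\R}|f(u)|\,du=A b^{n-1}$ by $\int_{|u|>1/\si}|f(u)|^n\,du\le b_1^{n-2}\int_{\R}|f(u)|^2\,du\le2\pi A_1 b_1^{n-2}$; the loss of one power of $\sup|f|$ there is exactly what produces the exponent $n-2$ (versus $n-1$ in the theorem) and the restriction to $n\ge3$. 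A tempting alternative --- grouping the $\xi_i$ in pairs $\eta_j=\xi_{2j-1}+\xi_{2j}$, whose characteristic function $f^2$ is integrable, and applying Theorem~\ref{t1} to the $\eta_j$ --- also works in principle (the pseudomoments of order $\le m$ of $\eta_j/(\si\sqrt2)$ still vanish), but it only covers even $n$ directly and yields a poorer constant in the exponential term, so the direct re-run above is preferable.
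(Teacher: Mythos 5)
Your proposal is correct and follows essentially the same route as the paper: the only substantive change from Theorem \ref{t1} is in bounding $I_2$, where you replace $\int_{\R}|f(t)|\,dt\le A$ by Parseval's bound $\int_{\R}|f(t)|^2\,dt\le 2\pi A_1$ and apply the Statulevi\v{c}ius inequality \eqref{equation88} with $d=A_1$ in place of $d=A/(2\pi)$, yielding $b_1$ and the factor $b_1^{n-2}$ with the attendant restriction $n\ge3$ --- exactly as in the paper. (Your parenthetical about an exact inversion formula with no $O(1/T)$ remainder slightly misdescribes how the paper uses hypothesis (i) --- the Lo\`eve inequality with the $24\sup|G'|/(\pi T)$ term is retained and supplies one of the two copies of $C^{(1)}_m$, $C^{(2)}_m$ --- but since you then transcribe the paper's low-frequency and tail estimates verbatim, this does not affect the argument.)
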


Note assumption (i) implies the existence of the density $p_n (x)$ of
the random variable $S_n$. Also, let $\phi(x)$ be the density of the
standard normal law.
\begin{thm}
\label{t2}
Let the conditions of Theorem \emph{\ref{t1}} hold.
Then, for all $n\ge2$,
\begin{align*}
\sup_{x\in\mathbb{R}} \bigl|p_n(x) - \phi(x)\bigr|
\le C_{m}^{(3)}\frac{\nu_{n}^{(1)}(m)}{n^{\frac
{m-1}{2}}}+C_{m}^{(4)}
\frac{\nu_{n}^{(2)}(m)}{n^{\frac{m-2}{2}}} +b^{n-1}\frac{\si\sqrt{n}}{2\pi}A+\nu_{n}(m)
\frac{e^{\frac
{3}{2}}}{\pi
}\frac{e^{-\frac{n}{2}}}{n},
\end{align*}
where
\[
C_{m}^{(3)}=\frac{12^{\frac{m+2}{2}}\varGamma (\frac
{m}{2}+1
)}{4\pi(m+1)!}, \qquad  C^{(4)}_m=2C^{(3)}_{m-1}.
\]
\end{thm}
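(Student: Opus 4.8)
The plan is to reduce the problem, by Fourier inversion, to a single integral estimate that runs in close parallel with the proof of Theorem~\ref{t1}. Hypothesis~(i) makes the characteristic function $f(t/(\si\sqrt n))^{n}$ of $S_n$ integrable, so (as already noted) $S_n$ has the bounded continuous density $p_n$, recovered by
\[
p_n(x)-\phi(x)=\frac1{2\pi}\int_{\R}e^{-itx}\Bigl(f\bigl(t/(\si\sqrt n)\bigr)^{n}-e^{-t^2/2}\Bigr)\,dt ,
\]
whence $\sup_x\llvert p_n(x)-\phi(x)\rrvert\le\frac1{2\pi}\int_{\R}\bigl\llvert f(t/(\si\sqrt n))^{n}-e^{-t^2/2}\bigr\rrvert\,dt$. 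This identity replaces the Esseen smoothing inequality used in Theorem~\ref{t1} and is in fact easier to handle; the only structural difference is that one now integrates a similar integrand against the weight~$1$ over all of $\R$, rather than against $1/\llvert t\rrvert$ over a bounded symmetric interval, and this is what modifies the numerical constants and inserts the factor $\si\sqrt n$ in the third summand.

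First I would write $\widehat H(s)=\int_{\R}e^{isx}\,dH(x)=f(s/\si)-e^{-s^2/2}$, so that $f(t/(\si\sqrt n))^{n}-e^{-t^2/2}=\bigl(e^{-t^2/(2n)}+\widehat H(t/\sqrt n)\bigr)^{n}-\bigl(e^{-t^2/(2n)}\bigr)^{n}$. Since $F(\,\cdot\,\si)$ and $\Ph$ have the same total mass, mean and variance, $\mu_0=\mu_1=\mu_2=0$, and by hypothesis~(ii) also $\mu_3=\cdots=\mu_m=0$; combining this with the two elementary remainder bounds for the Taylor polynomial of $e^{i\theta}$, applied after splitting the integral defining $\widehat H$ at $\llvert x\rrvert=\si\sqrt n$ (using $\llvert\theta\rrvert^{m+1}/(m+1)!$ on $\{\llvert x\rrvert\le\si\sqrt n\}$ and $2\llvert\theta\rrvert^{m}/m!$ on $\{\llvert x\rrvert>\si\sqrt n\}$), gives
\[
\bigl\llvert\widehat H(t/\sqrt n)\bigr\rrvert\le\frac{\llvert t\rrvert^{m+1}}{n^{(m+1)/2}(m+1)!}\,\nu_{n}^{(1)}(m)+\frac{2\llvert t\rrvert^{m}}{n^{m/2}\,m!}\,\nu_{n}^{(2)}(m)=:\psi_n(t),
\]
while the factorization $z^{n}-w^{n}=(z-w)\sum_{j=0}^{n-1}z^{j}w^{n-1-j}$ gives
\[
\bigl\llvert f(t/(\si\sqrt n))^{n}-e^{-t^2/2}\bigr\rrvert\le n\,\psi_n(t)\,\max\bigl(\llvert f(t/(\si\sqrt n))\rrvert,\,e^{-t^2/(2n)}\bigr)^{n-1}.
\]

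Then I would split $\R$ at $\llvert t\rrvert=\sqrt n$. On the central part $\llvert t\rrvert\le\sqrt n$ one exploits $\nu_n(m)<\tfrac12 e^{-3/2}$ exactly as in Theorem~\ref{t1}: writing $r=t/\sqrt n$, so $\llvert r\rrvert\le1$, the bounds $\llvert r\rrvert^{m},\llvert r\rrvert^{m+1}\le r^{2}$ (since $m\ge3$) give $\llvert\widehat H(t/\sqrt n)\rrvert\le\tfrac38\,\nu_n(m)\,t^{2}/n$, hence $\llvert f(t/(\si\sqrt n))\rrvert\le e^{-t^{2}/(4n)}$ and the last factor above is $\le e^{-(n-1)t^{2}/(4n)}\le e^{-t^{2}/8}$ for $n\ge2$. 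Dropping the restriction $\llvert t\rrvert\le\sqrt n$ and computing $\int_{\R}\llvert t\rrvert^{k}e^{-ct^{2}}\,dt=\varGamma\bigl(\tfrac{k+1}2\bigr)c^{-(k+1)/2}$ with $c=(n-1)/(4n)\ge1/8$ converts the central part into the two leading summands $C^{(3)}_m\nu_{n}^{(1)}(m)n^{-(m-1)/2}$ and $C^{(4)}_m\nu_{n}^{(2)}(m)n^{-(m-2)/2}$, up to a lower-order remainder of the stated form $\nu_n(m)\frac{e^{3/2}}{\pi}\frac{e^{-n/2}}{n}$. On the peripheral part $\llvert t\rrvert>\sqrt n$ I would bound $\llvert f(t/(\si\sqrt n))^{n}-e^{-t^2/2}\rrvert\le\llvert f(t/(\si\sqrt n))\rrvert^{n}+e^{-t^2/2}$; the Gaussian term is again of lower order, and the substitution $s=t/(\si\sqrt n)$ yields
\[
\frac1{2\pi}\int_{\llvert t\rrvert>\sqrt n}\bigl\llvert f(t/(\si\sqrt n))\bigr\rrvert^{n}\,dt=\frac{\si\sqrt n}{2\pi}\int_{\llvert s\rrvert>1/\si}\llvert f(s)\rrvert^{n}\,ds\le\frac{\si\sqrt n}{2\pi}\,A\,b^{\,n-1},
\]
where $b<1$ is the constant from Theorem~\ref{t1}, which dominates $\sup_{\llvert s\rrvert\ge1/\si}\llvert f(s)\rrvert$. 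Adding the three contributions gives the assertion.

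The hard part is the same as in Theorem~\ref{t1}: producing, out of the single integrability hypothesis $A=\int_{\R}\llvert f\rrvert<\infty$, the \emph{explicit} constant $b<1$ that dominates $\llvert f(s)\rrvert$ for $\llvert s\rrvert\ge1/\si$ — obtained from the density bound $p\le A/(2\pi)$, Chebyshev's inequality to confine most of the mass of $\xi$, and an oscillation estimate for $1-\cos\theta$; this is precisely where assumption~(i) is indispensable. By comparison the Taylor expansion near the origin and the Gaussian moment integrals are routine, although keeping exact track of the numerical constants $C_m^{(3)}$, $C_m^{(4)}$ requires some care.
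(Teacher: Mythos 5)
Your overall architecture --- Fourier inversion to reduce everything to $\frac1{2\pi}\int_{\R}\llvert f^n(t/(\si\sqrt n))-e^{-t^2/2}\rrvert\,dt$, the Lemma-1-type bound on $\om(t)$, the telescoping bound on $\llvert u^n-v^n\rrvert$, and the Statulevi\v cius-type estimate $\llvert f(s)\rrvert\le b$ for $\llvert s\rrvert\ge 1/\si$ to control $\int\llvert f\rrvert^n$ --- coincides with the paper's proof, and your treatment of the central region and of the $\int\llvert f\rrvert^n$ tail is sound (the Gaussian-moment integrals with $c=(n-1)/(4n)\ge 1/8$ in fact give constants dominated by $C_m^{(3)},C_m^{(4)}$). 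The one substantive deviation is your choice of cutoff, and that is exactly where the argument breaks.

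You split at $\llvert t\rrvert=\sqrt n$; the paper splits at $\llvert t\rrvert=T_1(n,m)\sqrt n$ with $T_1(n,m)=\sqrt{-2\ln(2e\nu_n(m))}\ge 1$. With your cutoff the Gaussian tail contributes
\[
\frac{1}{2\pi}\int_{\llvert t\rrvert>\sqrt n}e^{-t^2/2}\,dt\;\le\;\frac{1}{\pi}\,\frac{e^{-n/2}}{\sqrt n},
\]
and this is \emph{not} ``of the stated form'': the fourth term of the theorem carries the factor $\nu_n(m)$, which may be arbitrarily small, so your tail can exceed it by an arbitrarily large factor, and it cannot be absorbed into the other three terms either. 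The factor $\nu_n(m)$ in the statement is produced precisely by the larger cutoff: $e^{-nT_1^2(n,m)/2}=(2e\nu_n(m))^n\le 2e\,\nu_n(m)\,e^{-(n-1)/2}$. You cannot simply enlarge your cutoff to $T_1(n,m)\sqrt n$, because your central-region estimate $\llvert r\rrvert^{m},\llvert r\rrvert^{m+1}\le r^2$ relies on $\llvert r\rrvert\le 1$; on $1<\llvert r\rrvert\le T_1(n,m)$ one needs the more delicate balancing of Lemma 2 (that $e^{-T_1^2/4}T_1^{m-1}/(m+1)!$ and its companion are bounded by absolute constants uniformly in $m$ and in $\nu_n(m)$), which yields $\llvert f(t/\si)\rrvert\le e^{-t^2/6}$ on all of $\llvert t\rrvert\le T_1(n,m)$; this is the missing ingredient. (Incidentally, the paper's own display (14) yields $e^{-n/2}/\sqrt n$ rather than the $e^{-n/2}/n$ written in the statement, but it does retain the essential factor $\nu_n(m)$, which your version loses.) As written, your argument proves only a weaker inequality with the fourth term replaced by $\frac{1}{\pi}e^{-n/2}/\sqrt n$.
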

\section{Auxiliary results. Proofs of the main results}
First we prove two auxiliary results. Denote $\om(t)=\llvert f
(\frac
{t}{\si} )-e^{-\frac{t^2}{2}}\rrvert $.
\begin{lem}
\label{l1}
Let $\mu_k=0$, $k=3,\ldots,m$. Then, for all $t\in\R$,
\[
\om(t)\le\frac{\llvert t\rrvert ^{m+1}}{(m+1)!}\nu _{n}^{(1)}(m)+
\frac
{2|t|^m}{m!}\nu_{n}^{(2)}(m).
\]
\end{lem}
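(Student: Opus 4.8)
The plan is to identify $\om(t)$ with the modulus of the Fourier--Stieltjes transform of the signed measure $dH$ and then to exploit the vanishing of the low-order pseudomoments. First I would observe that, since $F(x\si)$ is the distribution function of $\xi_1/\si$ while $\Ph$ is that of a standard normal variable, for every $t\in\R$
\[
f\Bigl(\frac{t}{\si}\Bigr)-e^{-\frac{t^2}{2}}=\int_{\R}e^{itx}\,dH(x),
\]
so that $\om(t)=\bigl|\int_{\R}e^{itx}\,dH(x)\bigr|$. Next I would note that the pseudomoments of orders $0,1,2$ vanish automatically: $\mu_0=\int_{\R}dH=0$ because $F(x\si)$ and $\Ph(x)$ have the same limits at $\pm\infty$; $\mu_1=0$ because $E\xi_1=0$; and $\mu_2=0$ because $E\xi_1^2=\si^2$. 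Together with the hypothesis $\mu_k=0$ for $k=3,\dots,m$ this gives $\int_{\R}x^k\,dH(x)=0$ for all $k=0,1,\dots,m$. These integrals are absolutely convergent: on $\{|x|\le\si\sqrt n\}$ the power $|x|^k$ is bounded and $\int_{\R}|dH|\le2$, while on $\{|x|>\si\sqrt n\}$ the integrand is dominated by $|x|^m$, whose $|dH|$-integral is $\nu_n^{(2)}(m)$ (if $\nu_n^{(2)}(m)=\infty$ the claimed bound is trivial).

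Consequently I can subtract the degree-$m$ Taylor polynomial of $x\mapsto e^{itx}$ under the integral sign without changing its value, which yields
\[
\om(t)=\Biggl|\int_{\R}\Bigl(e^{itx}-\sum_{k=0}^{m}\frac{(itx)^k}{k!}\Bigr)dH(x)\Biggr|\le\int_{\R}\Bigl|e^{itx}-\sum_{k=0}^{m}\frac{(itx)^k}{k!}\Bigr|\,\bigl|dH(x)\bigr|.
\]
I would then invoke the classical estimate for the Taylor remainder of the exponential (see \cite{petrov}), namely $\bigl|e^{iu}-\sum_{k=0}^{m}\frac{(iu)^k}{k!}\bigr|\le\min\bigl\{\frac{|u|^{m+1}}{(m+1)!},\,\frac{2|u|^m}{m!}\bigr\}$ for all real $u$, and split the domain of integration at $|x|=\si\sqrt n$. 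On $\{|x|\le\si\sqrt n\}$ I would use the first member of the minimum, producing $\frac{|t|^{m+1}}{(m+1)!}\int_{|x|\le\si\sqrt n}|x|^{m+1}|dH(x)|=\frac{|t|^{m+1}}{(m+1)!}\nu_n^{(1)}(m)$; on $\{|x|>\si\sqrt n\}$ I would use the second member, producing $\frac{2|t|^m}{m!}\int_{|x|>\si\sqrt n}|x|^m|dH(x)|=\frac{2|t|^m}{m!}\nu_n^{(2)}(m)$. Adding the two contributions gives precisely the asserted inequality.

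There is no essential obstacle here; the only points needing care are the automatic vanishing of $\mu_0,\mu_1,\mu_2$ and the absolute convergence of the pseudomoments up to order $m$, which is what legitimizes the termwise subtraction of the Taylor polynomial. The one genuinely structural choice is to match each Taylor-remainder estimate to the appropriate region: the sharper bound $|u|^{m+1}/(m+1)!$ requires control of the $(m+1)$st absolute moment, available only on the truncated set $\{|x|\le\si\sqrt n\}$, whereas on the tail $\{|x|>\si\sqrt n\}$ one has to settle for the bound $2|u|^m/m!$, which costs the constant $2$ but needs only the $m$th power, controlled there by $\nu_n^{(2)}(m)$.
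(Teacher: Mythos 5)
Your proof is correct and follows essentially the same route as the paper: subtract the degree-$m$ Taylor polynomial of $e^{itx}$ under the integral (justified by the vanishing pseudomoments), apply the standard remainder bound $\min\{|u|^{m+1}/(m+1)!,\,2|u|^m/m!\}$ (the paper cites Zolotarev's inequality with $\de=1$ and $\de=0$, which gives the same two bounds), and split the integral at $|x|=\si\sqrt n$. Your explicit verification that $\mu_0=\mu_1=\mu_2=0$ is a small point the paper leaves implicit, but the argument is identical in substance.
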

\begin{proof}
Recall that
$f(t)=\int_{-\infty}^{\infty}{e^{itx}dF(x)}$. Therefore,
$f (\frac{t}{\si} )=\int_{-\infty}^{\infty
}{e^{\frac
{itx}{\si}}dF(x)}=\int_{-\infty}
^{\infty}{e^{itx}dF(x\si)}$.
By the condition of the lemma, the pseudomoments up to order $m$ equal
zero. Hence, it is easy to deduce that
\begin{align*}
\om(t)&=\biggl\llvert \int_{\R}{e^{itx}dF(x
\si)}-\int_{\R
}{e^{itx}d\Ph(x)}\biggr\rrvert =\biggl\llvert
\int_{\R}{e^{itx}d \bigl(F(x\si )-\Ph(x) \bigr)}\biggr
\rrvert
\\
&=\Biggl\llvert \int_{\R}{ \Biggl(e^{itx}-\sum
_{k=0}^{m}\frac
{(itx)^k}{k!} \Biggr)dH(x)}\Biggr
\rrvert \le\int_{\R}{\Biggl\llvert e^{itx}-\sum
_{k=0}^{m}\frac{(itx)^k}{k!}\Biggr\rrvert \bigl
\llvert dH(x)\bigr\rrvert }.
\end{align*}
Using the inequality (\cite{Zolotarev}, p. 372)
\[
\biggl\llvert e^{i\al}-1-\cdots-\frac{(i\al)^m}{m!}\biggr\rrvert \le
\frac
{2^{1-\delta
}|\al|^{m+\delta}}{m!(m+1)^\delta},\quad m=0,1,\ldots,\ \de\in[0,1],
\]
with $\de= 1$ and $\de= 0$ we obtain
\begin{align*}
\om(t)&\le\int_{|x|\le\si\sqrt{n}}{\Biggl\llvert
e^{itx}-\sum_{k=0}^{m}
\frac{(itx)^k}{k!}\Biggr\rrvert \bigl\llvert dH(x)\bigr\rrvert }+\int_{|x|>\si\sqrt{n}}{
\Biggl\llvert e^{itx}-\sum_{k=0}^{m}
\frac{(itx)^k}{k!}\Biggr\rrvert \bigl\llvert dH(x)\bigr\rrvert }\\
&\le\int_{|x|\le\si\sqrt{n}}\frac
{|tx|^{m+1}}{(m+1)!}\bigl\llvert dH(x)\bigr\rrvert \quad+
\int_{|x|>\si\sqrt{n}}{\frac
{2|tx|^m}{m!}\bigl\llvert dH(x)\bigr\rrvert }\\
&=\frac{|t|^{m+1}}{(m+1)!}\nu_{n}^{(1)}(m)+\frac{2|t|^m}{m!}\nu_{n}^{(2)}(m).
\end{align*}
The lemma is proved.
\end{proof}
Now, denote $T_{1}(n,m)=\sqrt{-2\ln(2e\nu_{n}(m))}$. Then, in turn, we
have that $\nu_{n}(m)=\frac{1}{2e}\exp\{-\frac{1}2T^2_{1}(n,m)\}$. Note
also that condition (ii) implies \hbox{$T_{1}(n,m)\geq1$}.
\begin{lem}
\label{l2}
Suppose that condition \emph{(ii)} of Theorem \emph{\ref{t1}} holds.
\begin{enumerate}[\rm1)]
\item For $|t|\le T_{1}(n,m)$, the characteristic function allows the
following bound\emph{:}\break
$\llvert f (\frac{t}{\si} )\rrvert \le e^{-\frac{t^2}{6}}$.
\item For $|t|>T_{1}(n,m)$, the characteristic function allows the
following bound\emph{:}\break
$\llvert f (\frac{t}{\si} )\rrvert \le (2e+\frac{3}8 )\nu
_{n}(m)|t|^{m+1}$.
\end{enumerate}
\end{lem}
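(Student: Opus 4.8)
The plan is to estimate $|f(t/\si)|$ in each range starting from the triangle inequality $|f(t/\si)|\le e^{-t^2/2}+\om(t)$ together with Lemma~\ref{l1}. Since $\nu_{n}(m)=\max\{\nu_{n}^{(1)}(m),\nu_{n}^{(2)}(m)\}$, Lemma~\ref{l1} gives $\om(t)\le\nu_{n}(m)\,g(|t|)$, where $g(s)=\frac{s^{m+1}}{(m+1)!}+\frac{2s^{m}}{m!}$ is increasing in $s\ge0$. Throughout I use the two facts recorded just before the statement: $T_{1}(n,m)\ge1$ and $e^{-T_{1}^{2}(n,m)/2}=2e\,\nu_{n}(m)$.

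Part 2) is the easier half. Let $|t|>T_{1}(n,m)$; then $|t|>1$, so $|t|^{m+1}>1$ and $|t|^{m}\le|t|^{m+1}$. Hence $e^{-t^2/2}\le e^{-T_{1}^{2}(n,m)/2}=2e\,\nu_{n}(m)\le 2e\,\nu_{n}(m)|t|^{m+1}$, while $g(|t|)\le\bigl(\frac{1}{(m+1)!}+\frac{2}{m!}\bigr)|t|^{m+1}=\frac{2m+3}{(m+1)!}\,|t|^{m+1}\le\frac38\,|t|^{m+1}$, the last step because $\frac{2m+3}{(m+1)!}$ decreases in $m$ and equals $\frac{9}{24}$ at $m=3$. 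Adding, $|f(t/\si)|\le e^{-t^2/2}+\nu_{n}(m)g(|t|)\le\bigl(2e+\frac38\bigr)\nu_{n}(m)|t|^{m+1}$, which is 2).

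For part 1), let $|t|\le T_{1}(n,m)$; this is just the rewriting $t^2\le-2\ln(2e\,\nu_{n}(m))$, i.e. $\nu_{n}(m)\le\frac{1}{2e}e^{-t^2/2}$. Therefore
\[
|f(t/\si)|\le e^{-t^2/2}+\nu_{n}(m)g(|t|)\le e^{-t^2/2}\bigl(1+\tfrac{g(|t|)}{2e}\bigr),
\]
and since $e^{-t^2/2}=e^{-t^2/6}e^{-t^2/3}$, the claim $|f(t/\si)|\le e^{-t^2/6}$ reduces to the $\nu_{n}$-free inequality $g(s)\le 2e\bigl(e^{s^2/3}-1\bigr)$ for all $s\ge0$ and $m\ge3$. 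I would prove this by comparing $g$ with the series $e^{s^2/3}-1=\sum_{j\ge1}\frac{s^{2j}}{3^{j}j!}$: it suffices to bound each of $\frac{s^{m+1}}{(m+1)!}$ and $\frac{2s^{m}}{m!}$ by $e\bigl(e^{s^2/3}-1\bigr)$. Exactly one of the exponents $m,m+1$ is even, say $2q$; there one keeps the single term $j=q$ and is left with an inequality of the shape $3^{q}q!\le c\,(2q)!$. The other exponent is odd, say $2q+1$; there one first applies $s^{2q+1}\le\frac12(s^{2q}+s^{2q+2})$ and then uses the two terms $j=q,j=q+1$. All resulting factorial inequalities hold with a wide margin for $m\ge4$; the single borderline case is the term $\frac{2s^{3}}{6}$ at $m=3$, which is dispatched directly from $e^{s^2/3}-1\ge\frac{s^{2}}{3}+\frac{s^{4}}{18}$, reducing to $s-\frac{e}{6}s^{2}\le e$ (true, the left side being at most $\frac{3}{2e}$).

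The substantive point is precisely this elementary inequality $g(s)\le 2e(e^{s^2/3}-1)$: it holds with room to spare, but a clean proof needs care in matching powers of $3$ against factorials, the AM--GM device for the odd exponent, and a separate look at $m=3$, where the crude term-by-term comparison is just barely insufficient. Everything else — part 2) and the reduction of part 1) to that inequality — is routine bookkeeping with the definition of $T_{1}(n,m)$.
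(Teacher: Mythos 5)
Your argument is correct. Part 2) coincides with the paper's proof: both bound $e^{-t^2/2}\le e^{-T_1^2(n,m)/2}=2e\nu_n(m)$ and absorb the polynomial factors into $\frac38|t|^{m+1}$ using $|t|>1$ and $m\ge3$. For part 1) you take a genuinely different route. The paper splits off only half of the Gaussian decay, writing $|f(t/\si)|\le e^{-t^2/4}(e^{-t^2/4}+e^{t^2/4}\om(t))$, uses $|t|\le T_1(n,m)$ to pull out a factor $t^2$ together with powers of $T_1(n,m)$, and then maximizes $e^{-x^2/4}x^{m-1}$ over $x$ (via a Stirling-type bound on the factorials) to show the bracket is at most $1+\frac{t^2}{12}\le e^{t^2/12}$. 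You instead convert $|t|\le T_1(n,m)$ directly into $\nu_n(m)\le\frac{1}{2e}e^{-t^2/2}$ and reduce the claim to the $\nu_n$-free, $T_1$-free inequality $g(s)\le 2e(e^{s^2/3}-1)$, proved by comparing $g$ term by term with the Taylor series of $e^{s^2/3}-1$. Your reduction is conceptually cleaner, at the price of a parity case analysis and a separate treatment of the borderline term $\frac{2s^3}{3!}$ at $m=3$, which you correctly identify (the naive AM--GM comparison fails there by a hair, $\frac16$ versus $\frac{e}{18}$) and dispatch via $e^{s^2/3}-1\ge\frac{s^2}{3}+\frac{s^4}{18}$ and the elementary bound $s-\frac{e}{6}s^2\le\frac{3}{2e}<e$. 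The factorial inequalities you leave as ``wide margin'' claims are indeed routine, the binding one being $3^{q+1}(q+1)!\le e(2q+1)!$ for $q\ge2$, so the sketch is complete in all essentials.
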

\begin{proof} Evidently,
$\llvert f (\frac{t}{\si} )\rrvert =\llvert f (\frac
{t}{\si
} )-e^{-\frac{t^2}{2}}+e^{-\frac{t^2}{2}}\rrvert \le e^{-\frac
{t^2}{2}}+\om(t)$. Now consider two cases.

$1)$ Let $| t |\le T_{1}(n,m)$. Then we can deduce from Lemma \ref{l1} that
\begin{align}\label{eq1}
\biggl\llvert f \biggl(\frac{t}{\si} \biggr)\biggr\rrvert
&\le e^{-\frac{t^2}{4}}\bigl(e^{-\frac{t^2}{4}}+e^{\frac{t^2}{4}}\om(t) \bigr)\nonumber\\
&\le e^{-\frac{t^2}{4}}\biggl(1+e^{\frac{t^2}{4}}\biggl(\frac{\llvert t\rrvert^{m+1}}{(m+1)!}
\nu_{n}^{(1)}(m)+\frac{2|t|^m}{m!}\nu_{n}^{(2)}(m)\biggr) \biggr)\nonumber\\
&\le e^{-\frac{t^2}{4}} \biggl(1+e^{\frac{T_{1}^{2}(n,m)}{4}}t^2
\biggl(\frac{T_{1}^{m-1}(n,m)}{(m+1)!}\nu_{n}^{(1)} (m)+\frac{2T_{1}^{m-2}(n,m)}{m!}
\nu_{n}^{(2)}(m) \biggr) \biggr)\nonumber\\
&\le e^{-\frac{t^2}{4}} \biggl(1+t^2e^{\frac{T_{1}^{2}(n,m)}{4}}{\nu_{n}(m)}
\biggl(\frac{T_{1}^{m-1}(n,m)}{(m+1)!} +\frac{2T_1^{m-2}(n,m)}{m!} \biggr) \biggr)\nonumber\\
&= e^{-\frac{t^2}{4}} \biggl(1+\frac{1}{2e}t^2e^{-\frac{T_{1}^{2}(n,m)}{4}}
\biggl(\frac{T_{1}^{m-1}(n,m)}{(m+1)!} +\frac{2T_1^{m-2}(n,m)}{m!} \biggr) \biggr).
\end{align}
Consider the function $f_1(x)=\exp\{-\frac{x^2}{4}\}x^{m-1}$. It
attains its maximal value at the point $x=\sqrt{2(m-1)}$, and this
value equals
\[
f_{1, \max}=\exp \biggl\{-\frac{m-1}{2} \biggr\} \bigl(2(m-1)
\bigr)^{\frac
{m-1}{2}}.
\]
Furthermore,
\begin{align*}
\exp \biggl\{-\frac{m-1}{2} \biggr\}\frac{(2(m-1))^{\frac{m-1}{2}}}{(m+1)!}
&\leq\frac{\exp \{-\frac{m-1}{2} \}(2(m-1))^{\frac{m-1}{2}}}{m(m+1)
\sqrt{2\pi(m-1)}(m-1)^{m-1}e^{-(m-1)}}\\
&= \biggl(\frac{2e}{m-1} \biggr)^{\frac{m-1}{2}}\frac{1}{\sqrt{2\pi(m-1)}}
\frac{1}{m(m+1)}\\
&\le\frac{1}{m(m+1)}.
\end{align*}
The last fraction attains its maximal value at the point $m=3$. Therefore,
\[
\exp \biggl\{-\frac{T_{1}^{2}(n,m)}{4} \biggr\}\frac
{T_{1}^{m-1}(n,m)}{(m+1)!}\leq
\frac{1}{12}.
\]
Similarly,
\[
\exp \biggl\{-\frac{T_{1}^{2}(n,m)}{4} \biggr\}\frac
{2T_1^{m-2}(n,m)}{m!}\le
\frac{1}{3}.
\]
From \eqref{eq1} together with two last bounds it follows that
\begin{align*}
\biggl\llvert f \biggl(\frac{t}{\si} \biggr)\biggr\rrvert
&\le e^{-\frac{t^2}{4}} \biggl(1+\frac{1}{2e}t^2
\biggl(\frac{1}{12}+\frac{1}{3} \biggr) \biggr) \leq e^{-\frac{t^2}{4}}
\biggl(1+\frac{1}{12}t^2 \biggr)\\
&\leq e^{-\frac{t^2}{4}}e^{\frac{t^2}{12}}\leq e^{-\frac{t^2}{6}},
\end{align*}
and the proof of the first statement follows.

$2)$ Now, let $|t|>T_1(n,m)$. Then we get from Lemma \ref{l1} that
\begin{align}\label{eq2}
\biggl\llvert f \biggl(\frac{t}{\si} \biggr)\biggr\rrvert
&\le e^{-\frac{t^2}{2}}+\om(t)\nonumber\\
&\le e^{-\frac{T^2_1(n,m)}{2}}+\frac{\llvert t\rrvert ^{m+1}}{(m+1)!}\nu _{n}^{(1)}(m)
+\frac{2|t|^m}{m!}\nu_{n}^{(2)}(m)\nonumber\\
&\le\nu_{n}(m) \biggl(2e+\frac{\llvert t\rrvert ^{m+1}}{(m+1)!}
+\frac{2|t|^m}{m!}\biggr).
\end{align}
Recall that $T_{1}(n,m)>1$. Then $|t|>T_1(n,m)>1$, and from \eqref
{eq2} we get that
\begin{equation*}
\biggl\llvert f \biggl(\frac{t}{\si} \biggr)\biggr\rrvert \le
\nu_{n}(m) \biggl(2e|t|^{m+1}+\frac{|t|^{m+1}}{24}+
\frac
{2|t|^{m+1}}{6} \biggr) \le \biggl(2e+\frac{3}8 \biggr)
\nu_{n}(m)|t|^{m+1},
\end{equation*}
whence the proof follows.
\end{proof}

Now we are in position to prove the main results.

\begin{proof}[Proof of Theorem \ref{t1}]
Let $F$ and $G$ be two distribution functions with characteristic
functions $f$ and $g$, respectively, and suppose that $G$ has a density
function, which we denote $G'$. We shall use the following inequality
from \cite{Loeve}, p.~297:
\begin{equation*}
\sup_{x\in\mathbb{R}}\bigl\llvert F(x)-G(x)\bigr\rrvert \le\frac
{2}{\pi}
\int_{0}^{T}\bigl|f(t)-g(t)\bigr|\frac{dt}{t}+
\frac{24\sup
|G^{'}|}{\pi T}.
\end{equation*}
Taking $F (x) = \varPhi_n(x)$ and $G (x) = \varPhi(x)$, we have
\begin{equation}\label{2}
\rho_n:=\sup_{x\in\mathbb{R}}
\bigl\llvert \varPhi_n(x)-\varPhi (x)\bigr\rrvert \le
\frac{2}{
\pi}\int_{0}^{T}\biggl\llvert f^n
\biggl(\frac{t}{\si\sqrt
{n}} \biggr)-e^{-\frac{t^2}{2}}\biggr\rrvert
\frac{dt}{t}+\frac{24}{\pi\sqrt{2\pi} T}.
\end{equation}
Let $n\ge2$.
First, from the elementary inequality
\[
\bigl\llvert u^n-v^n\bigr\rrvert \le\llvert u-v\rrvert
\sum_{k=1}^{n}|u|^{k-1}|v|^{n-k}
\]
and from Lemma \ref{l1} it follows that, for $t\le T_1(n,m)\sqrt{n}$,
\begin{align}\label{3}
\biggl\llvert f^n \biggl(\frac{t}{\si\sqrt{n}} \biggr) -e^{-\frac{t^2}{2}}\biggr\rrvert
&\le \om \biggl(\frac{t}{\sqrt{n}} \biggr)
\sum_{k=1}^{n}\biggl\llvert f\biggl(\frac{t}{\si\sqrt{n}} \biggr) \biggr\rrvert ^{k-1}
e^{-\frac{t^2}{2n}(n-k)}\nonumber\\
&\le\om \biggl(\frac{t}{\sqrt{n}} \biggr)\sum_{k=1}^{n}
e^{-\frac{t^2}{6}\frac{n-1}{n}}
\le\om \biggl(\frac{t}{\sqrt{n}} \biggr)ne^{-\frac{t^2}{12}}\nonumber\\
&\le n \biggl(\frac{|t|^{m+1}}{(m+1)!n^{\frac{m+1}{2}}}\nu_{n}^{(1)} (m)
+\frac{2|t|^m}{m!n^\frac{m}{2}}\nu_{n}^{(2)}(m) \biggr)
\exp \biggl\{ -\frac{t^2}{12} \biggr\}\nonumber\\
&=\exp \biggl\{-\frac{t^2}{12} \biggr\} \biggl(\frac{|t|^{m+1}}{(m+1)!
n^{\frac{m-1}{2}}}\nu_{n}^{(1)} (m)+\frac{2|t|^{m}}{m!n^\frac{m-2}{2}}
\nu_{n}^{(2)}(m)\biggr).
\end{align}
Second, introduce the following notation:
\begin{align*}
C^{(1)}_{m,n}=\frac{12^{\frac{m-1}{2}}\varGamma(\frac
{m+1}{2})}{2n^{\frac
{m-1}{2}}(m+1)!}, \qquad  C^{(2)}_{m,n}=2C^{(1)}_{m-1,n},\\
T_2(n,m)=\frac{1}{\sqrt{2\pi} (C^{(1)}_{m,n}\nu
_{n}^{(1)}(m)+C^{(2)}_{m,n}\nu_{n}^{(2)}(m) )}.
\end{align*}
Then
\begin{equation}\label{equ5}
\frac{24}{\pi\sqrt{2\pi} T_2(n,m)}=C^{(1)}_m
\frac{\nu_{n}^{(1)}(m)}{n^{\frac{m-1}{2}}}+C^{(2)}_m
\frac{\nu_{n}^{(2)}(m)}{n^{\frac{m-2}{2}}}.
\end{equation}

Let $T_3 (n,m)= (T_1 (n,m)\sqrt{n})\wedge T_2(n,m)$. Then it follows
from (\ref{2}) and \eqref{equ5} that
\begin{align}\label{4} %
\rho_n &\le\frac{2}{\pi}\int_{0}^{T_3(n,m)}\biggl\llvert f^n
\biggl(\frac{t}{\si\sqrt{n}} \biggr)-e^{-\frac{t^2}{2}}\biggr\rrvert \frac{dt}{t}
+\frac{2}{\pi}\int_{T_3(n,m)}^{T_2(n,m)}\biggl\llvert f
\biggl(\frac{t}{\si\sqrt{n}} \biggr)\biggr\rrvert ^n\frac{dt}{t}\nonumber\\
&\quad+\frac{2}{\pi}\int_{T_3(n,m)}^{T_2(n,m)}e^{-\frac{t^2}{2}}
\frac{dt}{t} +\frac{24}{\pi\sqrt{2\pi} T_2(n,m)}\nonumber\\
&=I_1(n,m)+I_2(n,m)+I_3(n,m)+C^{(1)}_m\frac{\nu_{n}^{(1)}(m)}{n^{\frac{m-1}{2}}}
+C^{(2)}_m\frac{\nu_{n}^{(2)}(m)}{n^{\frac{m-2}{2}}}.
\end{align}
Since $T_3(n,m)\le T_1(n,m) \sqrt{n}$, from (\ref{3}) we get that
\begin{align}\label{5}
I_1(n,m)&=\frac{2}{\pi}\int_{0}^{T_3(n,m)}\biggl\llvert f^n
\biggl(\frac{t}{\si\sqrt{n}} \biggr)-e^{-\frac{t^2}{2}}\biggr\rrvert \frac{dt}{t}\nonumber\\
&\le\frac{2}{\pi}\int_{0}^{T_3(n,m)} \biggl(\frac{t^{m}}{(m+1)!
n^{\frac{m-1}{2}}}\nu_{n}^{(1)}(m)+\frac{2t^{m-1}}{m!n^{\frac{m-2}{2}}}\nu_{n}^{(2)}(m)\biggr)
e^{-\frac{t^2}{12}}dt\nonumber\\
&\le\frac{12^{\frac{m+1}{2}}\varGamma(\frac{m+1}{2})} {\pi(m+1)!n^{\frac{m-1}{2}}}
\nu_{n}^{(1)}(m)+\frac{2\cdot12^{\frac{m}{2}}
\varGamma(\frac{m}{2})} {\pi m!n^{\frac{m-2}{2}}}{\nu_{n}^{(2)}(m)}\nonumber\\
&=C^{(1)}_m\frac{\nu_{n}^{(1)}(m)}{n^{\frac{m-1}{2}}}+C^{(2)}_m
\frac{\nu_{n}^{(2)}(m)}{n^{\frac{m-2}{2}}}.
\end{align}
If $T_{3}(n,m)=T_{2}(n,m)$, then $I_2(n,m)=0$ and $I_3(n,m)=0$.
Therefore, we consider the case $T_{3}(n,m)=T_{1}(n,m)\sqrt{n}$. Then
\begin{equation*}
I_2(n,m)=\frac{2}{\pi}\int_{T_3(n,m)}^{T_2(n,m)}\biggl\llvert f
\biggl(\frac{t}{\si\sqrt{n}} \biggr) \biggr\rrvert ^n\frac{dt}{t}=
\frac{2}{\pi}\int_{T_1(n,m)/\sigma
}^{T_2(n,m)/\sigma
\sqrt{n}}\bigl\llvert f ({t} )\bigr\rrvert
^{n}\frac{dt}{t}.
\end{equation*}
Now we apply the following result of Statulevi\v cius \cite
{Statulyavichus}: if a random variable with characteristic function $f
(t)$ has a density $p (x) \le d <\infty$ and variance $\si^ 2$, then,
for any $t\in\mathbb{R}$,
\begin{equation}
\label{equation88}\bigl\llvert f(t)\bigr\rrvert \le\exp \biggl\{ -\frac
{t^2}{96d^{2}(2\si|t|+\pi)^2}
\biggr\}.
\end{equation}
It follows from condition (i) that the density $p(x)$ of any $\xi_n$
can be obtained as the inverse Fourier transform $p(x)=\frac{1}{2\pi
}\int_{\R}e^{-itx}f(t)dt$ and $p(x)\le\frac{1}{2\pi}\int_{-\infty
}^{\infty}|f(t)|dt=\frac{A}{2\pi}$.\vadjust{\goodbreak}
Besides, the function $\frac{t^2}{(2\si t+\pi)^2}$ is increasing for
$t> 0$. Therefore, for $| t |\ge T_1(n,m)/\sigma$ (recall that $T_1(n,m)>1$),
\[
\bigl|f(t)\bigr|\le\exp \biggl\{-\frac{\pi^2}{24A^2\si^2 (2+\pi
)^2} \biggr\}=:b,
\]
and $0<b<1$.
Then
\begin{equation}
\label{7} I_2(n,m)=\frac{2}{\pi}\int_{T_1(n,m)/\sigma}^{T_2(n,m)/\sigma\sqrt{n}}
\bigl\llvert f ({t} )\bigr\rrvert ^{n}\frac{dt}{t} \le
\frac{2\si}{\pi}b^{n-1}\int_{0}^{\infty}\bigl|f(t)\bigr|{dt}
=\frac{\si
A}{\pi
}b^{n-1}.
\end{equation}
Finally, we bound $I_3(n,m)$. Note that $I_3(n,m)$ is nonzero only if
$T_1(n,m)\sqrt{n}<T_2(n,m)$. Therefore,
\begin{align} \label{9}
I_3(n,m)&\le\frac{2}{\pi} \int_{T_1(n,m)\sqrt{n}}^{\infty}e^{-\frac{t^2}{2}}
\frac{dt}{t}\le\frac{2}{\pi}\frac{e^{-\frac{nT^2_1(n,m)}{2}}}{nT^2_1(n,m)}\nonumber\\
&\le \frac{2(2e\nu_n(m))^n}{\pi n}\le\frac{4e\nu_n(m)}{\pi n}\bigl(2e\nu _n(m)\bigr)^{n-1}
\le\nu_n(m)\frac{4e\cdot e^{{-\frac{n-1}{2}}}}{\pi n}\nonumber\\
&=\nu_{n}(m)\frac{4e^\frac{3}{2}}{\pi}\frac{e^{-\frac{n}{2}}}{n}.
\end{align}
Relations (\ref{4})--(\ref{9}) supply the proof of Theorem \ref{t1}.
\end{proof}

\begin{rem}\label{rem1}
Let the following conditions hold: $\mu_k=0$, $k=3,\ldots,m$, $m\ge3$.
Then
\begin{align*}
\sup_{x\in\mathbb{R}} \bigl|\Ph_1(x) - \Ph(x)\bigr|
&=\sup_{x\in\mathbb{R}}\bigl|F(x\si) - \Ph(x)\bigr|\\
&\le \biggl(\frac{6}{\pi(m+1)!}+\frac{2}{\pi\sqrt{2\pi}} \biggr)\max \bigl(
\nu_{1}(m), \bigl(\nu_{1}(m) \bigr)^{\frac{1}{m+2}} \bigr).
\end{align*}

Indeed, let $n=1$. Theorem is obvious when $\nu_{1}(m)>1$. Let $\nu
_{1}(m)\le1$. Put $T=(\nu_1(m))^{-\frac{1}{m+2}}$ into \emph{(\ref
{2})}. Then from Lemma \ref{l1} it follows that
\begin{align*}
\rho_1&=\sup_{x\in\mathbb{R}} \bigl|\Ph_1(x) - \Ph(x)\bigr|
=\sup_{x\in\mathbb{R}} \bigl|F(x\si) - \Ph(x)\bigr|\\
&\le\frac{2}{\pi}\int_{0}^{T}
\biggl(\frac{|t|^{m+1}}{(m+1)!}\nu_{1}^{(1)} (m)
+\frac{2|t|^m}{m!}\nu_{1}^{(2)}(m)\biggr)\frac{dt}{t}
+\frac{24}{\pi\sqrt{2\pi}T}\\
&\le\frac{2}{\pi} \biggl(\frac{T^{m+1}}{(m+1)\cdot(m+1)!}\nu _{1}^{(1)}(m)
+\frac{2T^m}{m\cdot m!}\nu_{1}^{(2)}(m) \biggr)
+\frac{24}{\pi\sqrt{2\pi}}\bigl(\nu_1(m)\bigr)^\frac{1} {m+2}\\
&\le\bigl(\nu_1(m)\bigr)^\frac{1} {m+2} \biggl(\frac{2}{\pi}\frac{3}{(m+1)!}
+\frac{24}{\pi\sqrt{2\pi}} \biggr).
\end{align*}
\end{rem}

\begin{proof}[Proof of Corollary \ref{t1'}]
\xch{Proof is}{is} similar to that of Theorem \ref{t1}.
We apply inequality \eqref{equation88} and recall again that
the function $\frac{t^2}{(2\si t+\pi)^2}$ is increasing for $t> 0$.
Therefore, for $| t |\ge T_1(n,m)/\sigma$ (recall that $T_1(n,m)>1$),
\[
\bigl|f(t)\bigr|\le\exp \biggl\{-\frac{1}{96A_1^2\si^2 (2+\pi
)^2} \biggr\}=:b_1,
\]
and $0<b_1<1$. It follows from \cite{Feller}, p. 510, that $\int
_{-\infty}^{\infty}|f(t)|^2dt\le2\pi A_1 $.
Therefore,
\begin{equation*}
I_2(n,m)=\frac{2}{\pi}\int_{T_1(n,m)/\sigma
}^{T_2(n,m)/\sigma
\sqrt{n}}\bigl\llvert f
({t} )\bigr\rrvert ^{n}\frac{dt}{t} \le\frac{2\si}{\pi}b_1^{n-2}
\int_{0}^{\infty}\bigl|f(t)\bigr|^2{dt} =2\si
A_1b_{1}^{n-2}.
\end{equation*}
Corollary \ref{t1'} is proved.
\end{proof}
\begin{rem} For $n=2$, we can get estimates similar to those in Remark
\ref{rem1}.
\end{rem}

\begin{proof}[Proof of Theorem \ref{t2}]
As it was mentioned before, condition (i) implies the existence of a
density for the random variable $\xi_k$, so the random variable~$S_n$
has the density
\[
p_n(x)=\int_{-\infty}^{\infty}e^{-itx}f^n
\biggl(\frac
{t}{\si\sqrt
{n}} \biggr)dt.
\]
Since $\phi(x)=\frac{1}{\sqrt{2\pi}}e^{-\frac{x^2}{2}}$ is the density
of the standard normal law, we have $\phi(x)=\frac{1}{2\pi}\int
_{-\infty}^{\infty}e^{-itx}e^{-\frac{t^2}{2}}dt$ and
\begin{align*}
\bigl\llvert p_n(x)-\phi(x)\bigr\rrvert &=\frac{1}{2\pi}\biggl
\llvert \int_{-\infty
}^{\infty}e^{-itx}f^n \biggl(
\frac{t}{\si\sqrt{n}} \biggr)dt-\int _{-\infty}^{\infty}e^{-itx}e^{-\frac{t^2}{2}}dt
\biggr\rrvert
\\
&\le\frac{1}{2\pi}\int_{-\infty}^{\infty}\biggl\llvert f^n
\biggl(\frac
{t}{\si\sqrt{n}} \biggr)-e^{-\frac{t^2}{2}}\biggr\rrvert dt.
\end{align*}
Therefore,
\begin{align}\label{11} %
\bigl\llvert p_n(x)-\phi(x)
\bigr\rrvert &\le\frac{1}{2\pi}\int_{|t|\le
T_{1}(n,m)\sqrt{n}}\biggl\llvert f^n \biggl(
\frac{t}{\si\sqrt{n}} \biggr)-e^{-\frac
{t^2}{2}}\biggr\rrvert dt\nonumber\\
&\quad+\frac{1}{2\pi}\int
_{|t|>T_{1}(n,m)\sqrt{n}}\biggl\llvert f \biggl(
\frac{t}{\si\sqrt{n}} \biggr)\biggr\rrvert ^n dt\nonumber\\
&\quad+\frac{1}{2\pi}\int
_{|t|>T_{1}(n,m)\sqrt{n}}e^{-\frac{t^2}{2}}dt\nonumber\\
&=I_1+I_2+I_3.
\end{align}
From the conditions of the theorem, Lemmas \ref{l1} and \ref{l2}, and
from (\ref{3}) $(n\ge2)$ we obtain the following: for $|t|\le
T_1(n,m)\sqrt{n}$ and $\nu_{n}(m)<\frac{1}{2}e^{-\frac{3}{2}}$,
\begin{align}
\label{12} %
I_1&=\frac{1}{2\pi}\int
_{|t|\le T_{1}(n,m)\sqrt{n}}
\biggl\llvert f^n \biggl(\frac{t}{\si\sqrt{n}} \biggr)-e^{-\frac{t^2}{2}}
\biggr\rrvert dt\nonumber\\
&\le\frac{1}{2\pi}\int
_{|t|\le T_{1}(n,m)\sqrt{n}} \biggl(\frac
{|t|^{m+1}\nu_{n}^{(1)}(m)}{(m+1)!n^{\frac{m-1}{2}}}+\frac
{2|t|^{m}\nu
_{n}^{(2)}(m)}{m!n^{\frac{m}{2}-1}}
\biggr)e^{-\frac{t^2}{12}}dt\nonumber\\
&\le\frac{12^{\frac{m+2}{2}}\varGamma (\frac{m}{2}+1)}{4\pi
(m+1)!}\frac{\nu_{n}^{(1)}(m)}{n^{\frac{m-1}{2}}}+\frac{2\cdot
12^{\frac{m+1}{2}}\varGamma (\frac{m-1}{2} )}{4\pi m!}\frac{\nu
_{n}^{(2)}(m)}{n^{\frac{m-2}{2}}}\nonumber\\
&=C_{m}^{(3)}\frac{\nu_{n}^{(1)}(m)}{n^{\frac
{m-1}{2}}}+C_{m}^{(4)} \frac{\nu_{n}^{(2)}(m)}{n^{\frac{m-2}{2}}}.
\end{align}
From the conditions of the theorem, similarly to (\ref{7}), we get
\begin{align}\label{13} %
I_2&=\frac{1}{2\pi}\int
_{|t|>T_{1}(n,m)\sqrt{n}}
\biggl\llvert f \biggl(\frac{t}{\si\sqrt{n}} \biggr)\biggr\rrvert ^n dt\nonumber\\
&=\frac{\si\sqrt{n}}{2\pi}\int
_{|z|>T_1(n,m)/\si}\bigl\llvert f(z)\bigr\rrvert ^{n}dz
\le b^{n-1}\frac{\si\sqrt{n}}{2\pi}A. \\
\label{14}
I_3&=\frac{1}{2\pi}\int_{|t|>T_{1}(n,m)\sqrt{n}}
e^{-\frac{t^2}{2}}dt\le\frac{1}{2\pi}\int_{T_1(n,m)\sqrt{n}}^{\infty}
e^{-\frac{t^2}{2}}dt\nonumber\\
&\le\frac{e^{-\frac{nT_{1}^{2}(n,m)}{2}}}{2\pi
\sqrt{n}T_{1}(n,m)}\le \frac{ (2e\nu_{n}(m) )^{n}}{2\pi\sqrt{n}}\nonumber\\
&\le\frac{e\nu_{n}(m)}{\pi\sqrt{n}}e^{-\frac{n-1}{2}}=\nu _{n}(m)
\frac{e^{\frac{3}{2}}}{\pi}\frac{e^{-\frac{n}{2}}}{\sqrt{n}}.
\end{align}
Relations (\ref{11})--(\ref{14}) supply the proof of Theorem \ref{t2}.
\end{proof}

\section{Example}
We give an example of application of Theorem \ref{t1'}.
It is similar to the example of \cite{Zolotarev}, p.\ 375, where the
discrete distribution was considered. Define the~distribution function
$F(x)$ as
\[
F(x)= %
\begin{cases}
\varPhi(x)&\text{if $|x|\ge\epsilon$,}\\
\varPhi(-\epsilon)&\text{if $-\epsilon<x<-\theta\epsilon$,}\\
\varPhi(\epsilon)&\text{if $\theta\epsilon<x<\epsilon$,}\\
\frac{1}{2}+\frac{\varPhi(\epsilon)-\frac{1}{2}}{\theta\epsilon
}x&\text{if
$|x|\le\theta\epsilon$,}
\end{cases} %
\]
where $0<\epsilon<1$, and $0<\theta<1$ is the root of the equation
\begin{equation}
\label{exp}\int
_{0}^{\epsilon}x^2d\varPhi (x)=
\frac
{ (\theta\epsilon )^2}{3} \biggl(\varPhi(\epsilon)-\frac
{1}{2} \biggr).
\end{equation}
This equation has a unique solution because $\int_{0}^{\epsilon
}x^2d\varPhi(x)\le\frac{\epsilon^2}{3} (\varPhi(\epsilon)-\frac
{1}{2} )$.
Indeed, on one hand,
\[
\varphi(x)=\frac{1}{\sqrt{2\pi}} \biggl(1-\frac{x^2}{2}+\frac
{x^4}{2^{2}2!}-
\frac{x^6}{2^{3}3!}+\frac{x^8}{2^{4}4!}-\cdots \biggr),
\]
and therefore,
\[
\varPhi(\epsilon)-\frac{1}{2}=\int
_{0}^{\epsilon}\varphi (x)dx=
\frac
{1}{\sqrt{2\pi}} \biggl(\epsilon-\frac{\epsilon^3}{6}+\frac
{\epsilon
^5}{40}-
\frac{\epsilon^7}{7\cdot2^{3}3!}+\cdots \biggr).
\]
So
\[
\varPhi(\epsilon)-\frac{1}{2}\ge\frac{1}{\sqrt{2\pi}} \biggl(\epsilon-
\frac
{\epsilon^3}{6} \biggr).
\]
On the other hand,
\begin{align*}
\int_{0}^{\epsilon}x^2 d\varPhi(x)
&=\frac{1}{\sqrt{2\pi}}\int_{0}^{\epsilon}
\biggl(x^2-\frac{x^4}{2}+\frac{x^6}{2^{2}2!}
-\frac{x^8}{2^{3}3!}+\cdots \biggr)dx\\[3pt]
&=\frac{1}{\sqrt{2\pi}} \biggl(\frac{\epsilon^3}{3}
-\frac{\epsilon^5}{10}+\frac{\epsilon^7}{7\cdot2^{2}2!}
-\frac{\epsilon^9}{9\cdot2^{3}3!}+\cdots \biggr)\\[3pt]
&\le\frac{1}{\sqrt{2\pi}}\biggl(\frac{\epsilon^3}{3}
-\frac{\epsilon^5}{10}+\frac{\epsilon^7}{56} \biggr)\\[3pt]
&\le\frac{1}{\sqrt{2\pi}} \biggl(\frac{\epsilon^3}{3}
-\frac{\epsilon^5}{10}+\frac{\epsilon^5}{56} \biggr)
=\frac{1}{\sqrt{2\pi}}\epsilon ^3 \biggl(\frac{1}{3}
-\frac{23}{280}\epsilon^2 \biggr)\\[3pt]
&=\frac{1}{\sqrt{2\pi}}\frac{\epsilon^3}{3}
\biggl(1-\frac{69}{280}\epsilon^2 \biggr)
\le\frac{1}{\sqrt{2\pi}}\frac{\epsilon^3}{3}
\biggl(1-\frac{\epsilon^2}{6} \biggr)\le\frac{\epsilon^2}{3}
\biggl(\varPhi(\epsilon)-\frac{1}{2} \biggr),
\end{align*}
and we immediately get that
\[
\int
_{0}^{\epsilon}x^2 d\varPhi(x)\le
\frac{\epsilon
^2}{3} \biggl(\varPhi (\epsilon)-\frac{1}{2} \biggr).
\]

It is obvious that the density function is bounded.
Moreover, $F(x)$ is symmetric. Therefore, $\mu_1=0$ and $\mu_3=0$.
Furthermore, taking into account (\ref{exp}), consider
\begin{align*}
\si^2&=\int
_{-\infty}^{\infty}x^2dF(x)=
\int
_{|x|\ge
\epsilon}x^2d\varPhi(x)+\int
_{-\theta\epsilon}^{\theta
\epsilon
}x^2
\frac{\varPhi(\epsilon)-\frac{1}{2}}{\theta\epsilon}dx\\[3pt]
&=\int
_{|x|\ge\epsilon}x^2d\varPhi(x)+\frac{\varPhi(\epsilon
)-\frac
{1}{2}}{\theta\epsilon}
\frac{2}{3} (\theta\epsilon )^3 =\int
_{-\infty}^{\infty}x^2d
\varPhi(x)=1.
\end{align*}
This means that $\mu_2=0$.
Consider further the pseudomoments
\begin{align*}
\nu_4&=\int
_{-\infty}^{\infty}x^4
\bigl\llvert d \bigl(F(x)-\varPhi (x) \bigr)\bigr\rrvert =\int
_{-\epsilon}^{\epsilon}x^4
\bigl\llvert d \bigl(F(x)-\varPhi (x) \bigr)\bigr\rrvert
\\[4pt]
&\le\epsilon^4\int
_{-\epsilon}^{\epsilon}\bigl\llvert d
\bigl(F(x)-\varPhi (x) \bigr)\bigr\rrvert \le\epsilon^{4}4 \biggl(
\varPhi(\epsilon)-\frac
{1}{2} \biggr), \\[5pt]
\nu_n^{(1)}(3)&= \int
_{|x|\le\si\sqrt
{n}}x^4\bigl\llvert d
\bigl(F(x)-\varPhi(x) \bigr)\bigr\rrvert =\int
_{-\epsilon
}^{\epsilon}x^4
\bigl\llvert d \bigl(F(x)-\varPhi(x) \bigr)\bigr\rrvert ,
\end{align*}
where $\epsilon$ can be chosen so that $\nu_4\le\frac
{1}{2}e^{-\frac
{3}{2}}$ and $\nu_n^{(1)}(3)\le\frac{1}{2}e^{-\frac{3}{2}}$. Then
\begin{equation*}
\nu_n^{(2)}(3)= \int
_{|x|>\si\sqrt
{n}}|x|^3\bigl\llvert
d \bigl(F(x)-\varPhi(x) \bigr)\bigr\rrvert =0.
\end{equation*}
Hence, condition (ii) of Theorem \ref{t1} holds. Therefore, the
function $F(x)$ satisfies the conditions of Theorem \ref{t1'} with $m=3$.


\end{document}